\def\XXint#1#2#3{{\setbox0=\hbox{$#1{#2#3}{\int}$}
    \vcenter{\hbox{$#2#3$}}\kern-.5\wd0}}
\theoremstyle{definition}
\newtheorem{definizione}{Definition}[section]
\theoremstyle{plain}
\newtheorem{teorema}{Theorem}[section]
\newtheorem{lemma}[teorema]{Lemma}
\newtheorem{corollario}[teorema]{Corollary}
\theoremstyle{definition}
\newtheorem{esempio}{Example}[section]
\newtheorem{oss}[esempio]{Remark}
\DeclareMathOperator{\R}{\mathbb{R}}
\newcommand{\myfootnote}[2]{\begingroup
	\def\@makefnmark{}%
	\addtocounter{footnote}{-1}%
	\footnote{\textbf{#1} #2}
	\endgroup}
 \title{On a Serrin-type overdetermined problem}
\author{Celentano A., Nitsch C., Trombetti C.}
\date{}
\newcommand{\Addresses}{{
  \bigskip 
  \footnotesize 
 
  \medskip 
 
  \noindent\textit{E-mail address}, A.~Celentano: \texttt{antonio.celentano2@unina.it}\\ 
    \noindent\textit{E-mail address}, C.~Nitsch: \texttt{c.nitsch@unina.it}\\
    \noindent\textit{E-mail address}, C.~Trombetti: \texttt{cristina@unina.it}
   \medskip 
     
    \noindent\textsc{Dipartimento di Matematica e Applicazioni ``R. Caccioppoli'', Universit\`a degli studi di Napoli Federico II, Via Cintia, Complesso Universitario Monte S. Angelo, 80126 Napoli, Italy.}

    \par\nopagebreak 

}}
\begin{document}
\maketitle

\begin{abstract} 
\noindent In this paper, we prove a Serrin-type result for an elliptic system of equations, overdetermined with both Dirichlet and a generalized Neumann conditions. With this tool, we characterize the critical shapes under volume constraint of some domain functionals.   

\noindent\textsc{MSC 2020: }35B06; 35J57; 35N25. 
 
\noindent\textsc{Keywords: } Overdetermined problem; radial symmetry; first eigenvalue; torsional rigidity; domain derivative.  

\end{abstract}
\Addresses 

\section{Introduction}
In the study of overdetermined elliptic problems, Serrin's pioneering work \cite{Serrin} marked a turning point. He proved the radial symmetry property of the solution to torsion problem, overdetermined with a Neumann boundary condition. More precisely, main result of \cite{Serrin} states the following:
\begin{teorema}\label{serr}
Let $\Omega\subset\mathbb{R}^n$ be a connected, bounded and open set whose boundary is of class $C^2$. If there exists a solution $u\in C^2(\overline{\Omega})$ satisfying 
\begin{equation}\label{sp}
\begin{cases}
-\Delta u=1 & \mbox{in } \Omega,\\ 
u=0& \mbox{on } \partial\Omega,\\
\frac{\partial u}{\partial \nu}=c& \mbox{on } \partial\Omega,\\
\end{cases}
\end{equation}
where $\nu$ is the inward normal to $\partial\Omega$. Then $\Omega=B_R(x_0)$ is a ball of radius $R$, centered at $x_0$ and $u(x)=\frac{R^2-|x-x_0|^2}{2n}$.
\end{teorema}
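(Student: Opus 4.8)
\emph{Proof strategy.} I would not reproduce Serrin's original moving‑planes argument but instead use Weinberger's $P$‑function method, which is shorter and sidesteps the delicate corner lemma. The starting observation is that, by the strong maximum principle, $u>0$ in $\Omega$; since $u\equiv 0$ on $\partial\Omega$ the gradient points inward along $\partial\Omega$, so $\nabla u=c\,\nu$ there with $c=|\nabla u|>0$ (the positivity of $c$ also follows from Hopf's lemma). Introduce the $P$-function
\[
P:=|\nabla u|^2+\tfrac{2}{n}\,u .
\]
Differentiating and using $\Delta u=-1$ (hence $\nabla(\Delta u)=0$),
\[
\Delta P=2|D^2u|^2+\tfrac{2}{n}\Delta u=2|D^2u|^2-\tfrac{2}{n}\ \ge\ \tfrac{2}{n}(\Delta u)^2-\tfrac{2}{n}=0,
\]
where the inequality is Cauchy–Schwarz, $|D^2u|^2\ge \tfrac1n(\tr D^2u)^2$. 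Thus $P$ is subharmonic; since on $\partial\Omega$ we have $u=0$ and $|\nabla u|^2=c^2$, the boundary value of $P$ is the constant $c^2$, and the maximum principle gives $P\le c^2$ throughout $\overline\Omega$.

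The second ingredient is a Rellich–Pohozaev identity used to compute $\int_\Omega P$ exactly. First, multiplying $-\Delta u=1$ by $u$ and integrating by parts (the boundary term vanishes) gives $\int_\Omega|\nabla u|^2\,dx=\int_\Omega u\,dx$. Next, integrate the pointwise identity
\[
2\,\div\!\big((x\!\cdot\!\nabla u)\nabla u\big)-\div\!\big(x\,|\nabla u|^2\big)=2(x\!\cdot\!\nabla u)\Delta u+(2-n)|\nabla u|^2
\]
over $\Omega$ (after translating so that a convenient basepoint is the origin). On $\partial\Omega$ one uses $u=0$, $\nabla u=-c\,\mathbf n$ (with $\mathbf n$ the outward normal), and $\int_{\partial\Omega}x\cdot\mathbf n\,dS=n|\Omega|$ to collapse the left-hand side to $c^2 n|\Omega|$; on the right, $\int_\Omega x\cdot\nabla u=-n\int_\Omega u$ and the relation above turn it into $(n+2)\int_\Omega|\nabla u|^2$. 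Hence $c^2 n|\Omega|=(n+2)\int_\Omega|\nabla u|^2$, and therefore
\[
\int_\Omega P\,dx=\tfrac{n+2}{n}\int_\Omega|\nabla u|^2\,dx=c^2|\Omega|=\int_\Omega c^2\,dx .
\]

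Combining the two ingredients, $\int_\Omega(c^2-P)\,dx=0$ with $c^2-P\ge 0$ forces $P\equiv c^2$, hence $\Delta P\equiv 0$ in $\Omega$, which means equality holds in the Cauchy–Schwarz step, i.e. $D^2u=\tfrac{\Delta u}{n}I=-\tfrac1n I$. Integrating twice, $u(x)=a-\tfrac{1}{2n}|x-x_0|^2$ for some $a\in\mathbb R$ and $x_0\in\mathbb R^n$; since $\{u=0\}=\partial\Omega$ must then be a sphere, $\Omega=B_R(x_0)$ and matching $u=0$ on $|x-x_0|=R$ gives $a=R^2/(2n)$, i.e. $u(x)=\frac{R^2-|x-x_0|^2}{2n}$.

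I expect the main obstacle to be the Pohozaev bookkeeping: getting all boundary terms, signs (the convention here is that $\nu$ is the \emph{inward} normal, so $c>0$) and dimensional constants to line up so that $\int_\Omega P$ comes out to be exactly $c^2|\Omega|$ — the whole argument hinges on this precise identity, not merely an inequality. The regularity hypotheses ($\partial\Omega\in C^2$, $u\in C^2(\overline\Omega)$) are exactly what is needed to justify the integrations by parts and to apply the maximum principle for the subharmonic $P$ up to the boundary, so no extra care is needed there. As an alternative one could instead follow Serrin's moving‑planes scheme: reflect $\Omega$ across hyperplanes orthogonal to each direction, compare $u$ with its reflection via Hopf's lemma, and reach the critical position; there the obstruction is the boundary point lemma at a corner (Serrin's corner lemma), which is the genuinely hard step in that route.
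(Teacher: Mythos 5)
Your proof is correct, but it follows a genuinely different route from the one the paper is built on. The paper states Theorem \ref{serr} as Serrin's classical result and its own arguments (for the generalized Theorem \ref{main}) are a moving-planes proof in Serrin's original style: reflection across a critical hyperplane, the strong maximum principle applied to $u-u^{\lambda_c}$, and the Serrin/Gidas--Ni--Nirenberg corner lemma (Lemma \ref{hopf}) to handle the case where $T_{\lambda_c}$ meets $\partial\Omega$ orthogonally. What you give instead is Weinberger's proof, which the paper explicitly mentions in the introduction as the known short alternative: the subharmonicity of $P=|\nabla u|^2+\tfrac2n u$ via the Cauchy--Schwarz inequality $|D^2u|^2\ge\tfrac1n(\Delta u)^2$, the boundary constancy $P=c^2$, and the Poho\v{z}aev identity $c^2 n|\Omega|=(n+2)\int_\Omega|\nabla u|^2$ forcing $P\equiv c^2$, hence $D^2u=-\tfrac1n I$ and radial symmetry. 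Your bookkeeping in the Rellich--Poho\v{z}aev step checks out (signs, the inward-normal convention, and $\int_{\partial\Omega}x\cdot\mathbf n\,dS=n|\Omega|$ are all handled correctly), and the final identification $\Omega=B_R(x_0)$ from $\partial\Omega\subseteq\{|x-x_0|=R\}$ with $\Omega$ connected is standard. The trade-off worth noting: your route is shorter and avoids the corner lemma entirely, but it leans on the rigid structure of $-\Delta u=1$ (the exact cancellation $\int_\Omega P=c^2|\Omega|$), and it does not extend to the semilinear equations $-\Delta u_i=f_i(u_i)$ or to the generalized overdetermining condition $F(\partial u_1/\partial\nu,\dots,\partial u_m/\partial\nu)=c$ treated in Theorem \ref{main}; that is precisely why the paper works with moving planes. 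One small point of rigor: $\Delta P$ involves third derivatives of $u$, which $u\in C^2(\overline\Omega)$ does not directly provide; you should invoke interior elliptic regularity (or analyticity) to justify that $P$ is subharmonic in $\Omega$ while remaining continuous up to $\partial\Omega$.
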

\noindent As remarked in \cite{Serrin}, Theorem \ref{serr} still holds if Poisson equation in \eqref{sp} is replaced by a non linear elliptic equation, whose structure satisfies some suitable conditions, provided the solution $u$ has constant sign in $\Omega$. For instance, Poisson semilinear equation $-\Delta u=f(u)$ can be taken into account. 

\medskip

\noindent The proof relies on two main tools: the moving planes method and a refinement of the Hopf Boundary Lemma \cite[Lemma 1]{Serrin}. The former was initially introduced by Alexandrov in \cite{alexandrov} to prove that the only compact, embedded $(n-1)$-dimensional $C^2$ hypersurfaces in $\mathbb{R}^n$ with constant mean curvature are the spheres. Serrin recovered this technique and adapted it to the PDEs framework. Since then, many generalization of this method have been made. We mention just a few of these. In \cite{GNN}, it was extended and applied to show symmetry and monotonicity properties of positive solutions to a class of elliptic problems with Dirichlet boundary condition. Afterwards, it was improved in order to treat more general domains with weaker regularity assumptions on the solution, see \cite{bereniren, dancer}. The moving planes method can be applied successfully
also in the case of $p$-Laplace operator \cite{DP, DS}. For a deep analysis involving also a quantitative approach, we refer to \cite{ciron}. 

\medskip

\noindent We point out that many different proofs of Theorem \ref{serr} and further extensions are currently available in the literature, see \cite{NT} for a survey on this topic. Weinberger provided an alternative short proof in \cite{weinberger}, based on the use of a suitable auxiliary function, called $P$-function, and Poho\v{z}aev identity. These techniques were extended to handle a general quasilinear operator in \cite{fk}. An other approach, which combines Poho\v{z}aev identity together with Newton inequalities, is shown in \cite{bnst}. With these tools, authors are also able to deal with a class of fully non linear equations, namely Hessian equations. We mention also the work \cite{Chenr}, where a deep connection between shape optimization and overdetermined problems is emphasized. Authors construct a shape functional  minimized by solutions to Serrin's problem. They infer the uniqueness of minimizer and then, expressing the optimality condition by means of the domain derivative, they show that the mean curvature of the boundary of the minimizer must be constant. Hence, it must be a ball.

\medskip

\noindent In this paper, we deal with an extension of the Serrin's problem to system of elliptic equations with Dirichlet datus, overdetermined with an other boundary condition which involves the normal derivatives of solutions and generalizes the Neumann one. Constraints of this type arise from shape optimization context, by imposing the optimality condition computed via domain derivative, see for instance \cite{Bmmtv}. The main result of this work is the following:
\begin{teorema}\label{main}
Let $\Omega\subset\R^n$ be a bounded, open, connected set of class $C^2$. For $i=1,..., m$, let $u_i\in C^2(\overline{\Omega})$, $u_i>0$ in $\Omega$, be solutions to
\begin{equation}
\begin{cases}
-\Delta u_i=f_i(u_i) & \mbox{in } \Omega,\\ 
u_i=0 & \mbox{on } \partial\Omega,\\
F\left(\frac{\partial u_1}{\partial \nu},..., \frac{\partial u_m}{\partial \nu}\right)=c& \mbox{on } \partial\Omega,\label{neum}\\
\end{cases}
\end{equation}
where $\nu$ is the inner unit normal to $\partial \Omega$ and $c\in \mathbb{R}$.  Let us assume that $f_i$ and $F$ satisfy the following assumptions:
 \begin{enumerate}[label=(\roman*)]
     \item $f_i=g_i+h_i$, with $g_i$ locally Lipschitz in $\R$ and $h_i$ non decreasing,
     \item $F:[0, +\infty)^m\rightarrow \mathbb{R}$ and $F\in C^1\left((0,+\infty)^m\right)$,
     \item $\forall \,i\in\{1,..., m\}$, $\forall \,(x_1,...,x_m)\in [0, +\infty)^m$, $\forall h>0$, it holds
     \begin{equation*}\label{monotonicity}
      F(x_1,...,x_i,...,x_m)\leq F(x_1,...,x_i+h,...,x_m)
     \end{equation*}
    \item $\exists\,j\in\{1,.., m\}$ such that $\frac{\partial F}{\partial x_j}>0$ in $(0, +\infty)^m$.
 \end{enumerate}
 Then $\Omega$ is a ball of radius $R$ and the solutions $u_i$ are radially symmetric. Moreover,
 \begin{equation*}
\dfrac{\partial u_i}{\partial r}<0 \quad \text{for }\{0<r<R\},\,\forall \,i\in\{1,..., m\}.
  \end{equation*}
\end{teorema}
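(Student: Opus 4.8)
The plan is to run the Alexandrov--Serrin moving planes method on the whole system $(u_1,\dots,u_m)$ simultaneously, using the overdetermined condition together with (iii)--(iv) only to discard the ``non‑symmetric'' stopping positions of the plane, and then, once $\Omega$ is known to be a ball, to read off radial monotonicity of each $u_i$ from the Gidas--Ni--Nirenberg theorem \cite{GNN}. \emph{Preliminaries.} Since $u_i>0$ in $\Omega$, $u_i=0$ on $\partial\Omega$ and $f_i=g_i+h_i$ with $g_i$ locally Lipschitz and $h_i$ nondecreasing, the Hopf boundary lemma (in the form compatible with this splitting, cf.\ \cite{Serrin,GNN,bereniren}) gives $\partial u_i/\partial\nu>0$ on $\partial\Omega$ for every $i$; hence the argument of $F$ in \eqref{neum} always lies in $(0,+\infty)^m$, where $F\in C^1$, (iii) forces $\partial F/\partial x_i\ge 0$, and (iv) gives $\partial F/\partial x_j>0$. \emph{Setup.} Fix a direction, say $e_n$; write $R_\lambda$ for the reflection about $T_\lambda=\{x_n=\lambda\}$, $x^\lambda=R_\lambda x$, $\Sigma_\lambda=\Omega\cap\{x_n>\lambda\}$ and $w_i^\lambda(x)=u_i(x^\lambda)-u_i(x)$ on $\Sigma_\lambda$. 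As $R_\lambda$ is an isometry, $-\Delta w_i^\lambda=f_i(u_i^\lambda)-f_i(u_i)$; where $w_i^\lambda\ge 0$, the splitting of $f_i$ gives $f_i(u_i^\lambda)-f_i(u_i)=\tilde c_i(x)\,w_i^\lambda+r_i$ with $\tilde c_i\in L^\infty$ and $r_i\ge 0$, so there $w_i^\lambda$ is a nonnegative supersolution of a linear uniformly elliptic operator and the narrow‑domain, Hopf and strong maximum principles all apply. Decreasing $\lambda$ from $\max_{\overline\Omega}x_n$ (where $w_i^\lambda\ge 0$ for all $i$ on the thin cap $\Sigma_\lambda$ by the narrow‑domain principle), let $\lambda_0$ be the smallest $\lambda$ such that $R_\mu(\overline{\Sigma_\mu})\subseteq\overline\Omega$ and $w_i^\mu\ge 0$ in $\Sigma_\mu$ for every $i$ and every $\mu\ge\lambda$; these persist at $\lambda=\lambda_0$ by continuity.

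At $\lambda_0$, in each connected component of $\Sigma_{\lambda_0}$ every $w_i^{\lambda_0}$ is, by the strong maximum principle, either $\equiv 0$ or strictly positive. The classical topological argument — exploiting $u_i>0$ in $\Omega$, $u_i=0$ on $\partial\Omega$ and the connectedness of $\Omega$ — shows that if some $w_i^{\lambda_0}$ vanishes identically in one component then $\Omega$ is symmetric about $T_{\lambda_0}$. Hence the dichotomy: \emph{either} $\Omega$ is symmetric about $T_{\lambda_0}$, \emph{or} every $w_i^{\lambda_0}$ is strictly positive in every component and the motion stopped for a purely geometric reason, i.e.\ $R_{\lambda_0}(\overline{\Sigma_{\lambda_0}})$ becomes tangent to $\partial\Omega$ at a point $P$, with either (B) $P\notin T_{\lambda_0}$ (internal tangency) or (C) $P\in T_{\lambda_0}$ and $\partial\Omega\perp T_{\lambda_0}$ at $P$. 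The role of (iii)--(iv) is precisely to exclude (B) and (C).

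Suppose (B). Apply Hopf's lemma to $w_j^{\lambda_0}>0$ at the point $P^{\lambda_0}\in\partial\Omega\cap\partial\Sigma_{\lambda_0}$ (interior ball condition from the $C^2$ regularity); using the tangency identity $R_{\lambda_0}\nu(P^{\lambda_0})=\nu(P)$ and $u_i^{\lambda_0}(P^{\lambda_0})=u_i(P)$ one gets $\partial_\nu w_i^{\lambda_0}(P^{\lambda_0})=\partial_\nu u_i(P)-\partial_\nu u_i(P^{\lambda_0})$, which is $>0$ for $i=j$ and $\ge 0$ for every $i$. Evaluating the boundary condition of \eqref{neum} at $P$ and at $P^{\lambda_0}$ and raising the coordinates one at a time, monotonicity of $F$ in each variable together with $\partial F/\partial x_j>0$ forces $c>c$ — impossible. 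Suppose (C). Since $u_i=0$ on $\partial\Omega$, $\nabla u_i(P)=\partial_\nu u_i(P)\,\nu(P)$, and $\partial\Omega\perp T_{\lambda_0}$ at $P$ means $\nu(P)\perp e_n$, so $\nabla w_i^{\lambda_0}(P)=0$ for all $i$; moreover $\Sigma_{\lambda_0}$ has at $P$ a right‑angle wedge. Serrin's corner lemma applied to $w_j^{\lambda_0}>0$ gives, along the bisector $\tau$, that $\partial_{\tau\tau}w_j^{\lambda_0}(P)>0$, while $\partial_{\tau\tau}w_i^{\lambda_0}(P)\ge 0$ for every $i$ (from $w_i^{\lambda_0}\ge 0$ and $\nabla w_i^{\lambda_0}(P)=0$). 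From $D^2w_i^{\lambda_0}(P)=R_{\lambda_0}\,D^2u_i(P)\,R_{\lambda_0}-D^2u_i(P)$ one computes $\partial_{\tau\tau}w_i^{\lambda_0}(P)=-2\,\partial_{e_n}\!\big(\partial_\nu u_i\big)(P)$, the right‑hand derivative being tangential to $\partial\Omega$ since $e_n\in T_P\partial\Omega$. Differentiating $F(\partial_\nu u_1,\dots,\partial_\nu u_m)=c$ tangentially in the direction $e_n$ at $P$ gives $\sum_i (\partial F/\partial x_i)\,\partial_{e_n}(\partial_\nu u_i)(P)=0$, i.e.\ $\sum_i (\partial F/\partial x_i)\,\partial_{\tau\tau}w_i^{\lambda_0}(P)=0$; but the left side is $\ge (\partial F/\partial x_j)\,\partial_{\tau\tau}w_j^{\lambda_0}(P)>0$, a contradiction.

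Hence, for \emph{every} direction, $\Omega$ is symmetric about a hyperplane orthogonal to it. Each such reflection maps $\Omega$ onto itself, hence fixes its centroid, which therefore lies on all these hyperplanes; thus $\Omega$ is invariant under the group generated by every reflection fixing that point, namely $O(n)$, so $\Omega=B_R(x_0)$. Finally, with $\Omega$ a ball, the symmetry theorem of \cite{GNN} — valid under the structural hypothesis (i) — gives that each $u_i$ is radially symmetric about $x_0$ and $\partial u_i/\partial r<0$ on $(0,R)$; equivalently this monotonicity follows from $w_i^\lambda>0$ in $\Sigma_\lambda$ for $x_0\cdot e<\lambda<\max_{\overline\Omega}(x\cdot e)$, taken over all directions $e$. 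The genuine obstacle is case (C): it needs Serrin's corner lemma together with the second‑order boundary identity above, and it relies on knowing beforehand that $w_j^{\lambda_0}\not\equiv 0$ — exactly the content of the topological lemma of the second paragraph, whose careful proof when $\Sigma_\lambda$ is disconnected (as well as the verification that the splitting (i) suffices for all the maximum principles used) is classical but must be carried out with care.
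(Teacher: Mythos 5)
Your overall strategy coincides with the paper's (moving planes; Hopf's lemma at an interior tangency; Serrin's corner lemma at an orthogonal contact, with (iii)--(iv) used to force the vanishing of the relevant mixed second derivative), but there is a genuine gap in your preliminary step, and it propagates into case (C). You claim that the Hopf boundary lemma yields $\partial u_i/\partial\nu>0$ on all of $\partial\Omega$. Under hypothesis (i) alone this is false: writing $w=-u_i$, the splitting only gives $\Delta w+\tilde c\,w\ge f_i(0)$ with $\tilde c$ bounded, which is a usable differential inequality for Hopf's lemma only when $f_i(0)\ge 0$, and nothing in (i) prevents $f_i(0)<0$. A concrete counterexample to the claim is $u(x)=1-\cos x$ on $(0,2\pi)$, which solves $-u''=u-1$ (so $g(t)=t-1$, $h\equiv0$ is an admissible splitting), is positive, vanishes at the endpoints, and has $u'(0)=0$. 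Consequently you may not assume that $\left(\partial_\nu u_1(P),\dots,\partial_\nu u_m(P)\right)$ lies in the open orthant $(0,+\infty)^m$.

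This matters exactly where you use it. In case (C) you differentiate $F(\partial_\nu u_1,\dots,\partial_\nu u_m)=c$ tangentially at the corner point $P\in T_{\lambda_0}\cap\partial\Omega$; but $F$ is only assumed $C^1$ on the open orthant, the sign information on its partial derivatives is only available there, and at such a corner point no Hopf-type lemma forces $\partial_\nu u_i(P)>0$. The paper closes this with a separate, $F$-free argument that you are missing: if $\partial_\nu u_j(P)=0$ for some $j$, then the function $\partial_\nu u_j\ge 0$ on $\partial\Omega$ attains its minimum at $P$, hence its tangential derivative in the moving direction vanishes there, which already gives $\partial^2 u_j/\partial\nu\,\partial e_n(P)=0$ and hence the corner-lemma contradiction; only when all $\partial_\nu u_i(P)>0$ does one differentiate $F$. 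You need to add this case distinction. A second, easily repaired slip occurs in case (B): you record only $\partial_\nu u_i(P)-\partial_\nu u_i(P^{\lambda_0})\ge 0$ for $i\ne j$, but to make the chain of inequalities for $F$ strict you must know that the segment along which the $j$-th coordinate is raised lies in $(0,+\infty)^m$; apply Hopf to every $w_i^{\lambda_0}$ (all are strictly positive in this case) to obtain $\partial_\nu u_i(P)>0$ for every $i$, which is what the paper does. The remaining omissions (verification of hypothesis \eqref{cond1} of Lemma \ref{hopf} via the identities \eqref{phi}, and the continuation argument behind your dichotomy at $\lambda_0$) are standard and are covered in the paper by citing Theorem \ref{gnn1}.
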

\noindent The proof is based on moving planes method and the arguments used are inspired by those of Serrin. In the end, we provide an application of the previous result to the study of critical shapes of some domain functionals involving the torsional rigidity $T(\Omega)$ and the first eigenvalue $\lambda_1(\Omega)$ of Dirichlet-Laplacian.
\begin{corollario}\label{torlambda}
Let $\Omega\subset\mathbb{R}^n$ be an open, bounded, connected set of class $C^{2,\gamma}$ for $\gamma\in(0,1)$. If $\Omega$ is a critical shape under volume constraint for the functional $J(\Omega)=T^{\alpha}(\Omega)\lambda_1^{\beta}(\Omega)$, with $\alpha, \beta \in \mathbb{R}$ not both zero and such that $\alpha\beta\leq 0$, then it is a ball. 
\end{corollario}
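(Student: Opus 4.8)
The plan is to turn the volume-constrained criticality condition into an overdetermined boundary condition of the form~\eqref{neum}, and then invoke Theorem~\ref{main} with $m=2$. First I would introduce the two state functions attached to the two functionals: let $w$ be the torsion function, i.e.\ the solution of $-\Delta w=1$ in $\Omega$, $w=0$ on $\partial\Omega$, so that $T(\Omega)=\int_\Omega w\,dx>0$, and let $\varphi$ be the first Dirichlet eigenfunction, $-\Delta\varphi=\lambda_1(\Omega)\varphi$ in $\Omega$, $\varphi=0$ on $\partial\Omega$, chosen positive and normalised by $\int_\Omega\varphi^2\,dx=1$. Since $\partial\Omega\in C^{2,\gamma}$, elliptic regularity gives $w,\varphi\in C^{2,\gamma}(\overline\Omega)\subset C^2(\overline\Omega)$; the strong maximum principle and the Hopf lemma give $w,\varphi>0$ in $\Omega$ and $\partial w/\partial\nu,\ \partial\varphi/\partial\nu>0$ on $\partial\Omega$; and, the first eigenvalue of a connected set being simple, $\lambda_1$ is shape-differentiable. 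I would then recall the Hadamard formulas: for a deformation field $V$ with normal component $V_n=V\cdot n$ on $\partial\Omega$ ($n$ the outer normal),
\begin{equation*}
T'(\Omega)[V]=\int_{\partial\Omega}\left(\frac{\partial w}{\partial\nu}\right)^{2}V_n\,d\sigma,\qquad
\lambda_1'(\Omega)[V]=-\int_{\partial\Omega}\left(\frac{\partial\varphi}{\partial\nu}\right)^{2}V_n\,d\sigma,
\end{equation*}
while $|\Omega|'[V]=\int_{\partial\Omega}V_n\,d\sigma$; the squares make the orientation of the normal immaterial.

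Next I would impose criticality. Since $T,\lambda_1>0$, the functional $J=T^{\alpha}\lambda_1^{\beta}$ is shape-differentiable, and criticality under the volume constraint means that there is a Lagrange multiplier $\mu\in\mathbb{R}$ with $J'(\Omega)[V]=\mu\,|\Omega|'[V]$ for every deformation field $V$. Plugging in the formulas above and using the arbitrariness of $V_n$, this is equivalent to the pointwise identity
\begin{equation*}
\alpha\,T^{\alpha-1}\lambda_1^{\beta}\left(\frac{\partial w}{\partial\nu}\right)^{2}-\beta\,T^{\alpha}\lambda_1^{\beta-1}\left(\frac{\partial\varphi}{\partial\nu}\right)^{2}=\mu\qquad\text{on }\partial\Omega .
\end{equation*}
Set $A:=\alpha\,T^{\alpha-1}\lambda_1^{\beta}$ and $B:=-\beta\,T^{\alpha}\lambda_1^{\beta-1}$; as $T,\lambda_1>0$, the sign of $A$ equals that of $\alpha$ and the sign of $B$ equals that of $-\beta$, so the hypothesis $\alpha\beta\le 0$ is exactly what forces $A$ and $B$ to have the same sign. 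Replacing the identity by its opposite if necessary, we may assume $A,B\ge 0$, and moreover $(A,B)\neq(0,0)$ because $(\alpha,\beta)\neq(0,0)$. Hence $\partial w/\partial\nu$ and $\partial\varphi/\partial\nu$ satisfy $A\,(\partial w/\partial\nu)^{2}+B\,(\partial\varphi/\partial\nu)^{2}=c$ on $\partial\Omega$, with $c=\pm\mu$.

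This is precisely the situation of Theorem~\ref{main} with $m=2$, $u_1=w$, $u_2=\varphi$, $f_1\equiv 1$, $f_2(t)=\lambda_1 t$ and $F(x_1,x_2)=A\,x_1^{2}+B\,x_2^{2}$ on $[0,+\infty)^2$. I would then check the four hypotheses: the $f_i$ are smooth, hence of the form $g_i+h_i$ with $g_i$ locally Lipschitz and $h_i\equiv 0$ non-decreasing, which gives (i); $F\in C^{1}((0,+\infty)^2)$, which gives (ii); for $h>0$ and $x_i\ge 0$ one has $A\,x_1^2\le A\,(x_1+h)^2$ and $B\,x_2^2\le B\,(x_2+h)^2$ since $A,B\ge 0$, which gives (iii); and, $A$ or $B$ being strictly positive, say $A>0$, the derivative $\partial F/\partial x_1=2A\,x_1$ is positive on $(0,+\infty)^2$, which gives (iv). Theorem~\ref{main} then yields that $\Omega$ is a ball. (When $\alpha=0$ or $\beta=0$ only one state function survives, and the argument runs verbatim with $m=1$ and $F(x_1)=x_1^{2}$, giving back Serrin's problem, respectively the overdetermined first-eigenvalue problem.)

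The computations involved are routine; the delicate points are the correct sign conventions in the Hadamard formulas, the passage from ``critical under volume constraint'' to a pointwise relation on $\partial\Omega$ --- which rests on the fact that $V_n$ can be prescribed freely subject only to $\int_{\partial\Omega}V_n\,d\sigma=0$ --- and, above all, the bookkeeping on the signs of $A$ and $B$: it is precisely the assumption $\alpha\beta\le 0$ that makes the resulting $F$ monotone in each variable, and hence makes Theorem~\ref{main} applicable. If instead $\alpha\beta>0$, then $A$ and $B$ would have opposite signs, $F$ would fail the monotonicity assumption (iii), and this approach would break down.
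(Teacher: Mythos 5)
Your proposal is correct and follows essentially the same route as the paper: derive the pointwise condition $\alpha T^{\alpha-1}\lambda_1^{\beta}\left(\partial w/\partial\nu\right)^{2}-\beta T^{\alpha}\lambda_1^{\beta-1}\left(\partial\varphi/\partial\nu\right)^{2}=\text{const}$ on $\partial\Omega$ from the Hadamard formulas and the volume constraint, then apply Theorem~\ref{main} with a quadratic $F$. In fact you are somewhat more careful than the paper, which omits the factors $\lambda_1^{\beta}$ and $T^{\alpha}$ in its displayed derivative and does not make explicit the sign normalization (replacing $F$ by $-F$ when $\alpha\le 0\le\beta$) that is needed for hypotheses (iii)--(iv) of Theorem~\ref{main} to hold.
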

\noindent The paper is organized as follows. In Section \ref{mmp}, we provide a detailed exposure of the moving planes method, recalling also some useful results proved in \cite{GNN}. In Section \ref{domder}, we give introductory notions and tools on the torsional rigidity, the first eigenvalue of Laplacian with Dirichlet boundary condition and domain derivative of shape functionals. Finally, in Section \ref{mt} we prove Theorem \ref{main} and Corollary \ref{torlambda}.
\section{Notions and preliminaries}
\subsection{The moving planes method}\label{mmp}
We provide a geometric description of the procedure of moving up planes perpendicular to a fixed direction, until a critical position is reached.
Let $\Omega\subset\R^n$ be a bounded domain with smooth boundary. Assigned a unit vector $\gamma$ in $\R^n$ and a real value $\lambda$, we define the hyperplane $T_{\lambda}=\{x\in\R^n\,|\,\gamma\cdot x=\lambda\}$, that is the boundary of the open halfspace $H_{\lambda}=\{x\in\R^n\,|\,\gamma\cdot x>\lambda\}$. If $\lambda$ is large enough, then $T_{\lambda}\cap \overline{\Omega}=\emptyset$. Decreasing $\lambda$, the hyperplane $T_{\lambda}$ moves continuously toward $\Omega$ along the direction $-\gamma$ and it begins to intersect $\overline{\Omega}$ for $\lambda=\lambda_0$. If $\lambda<\lambda_0$, the open cap $\Sigma(\lambda)=H_{\lambda}\cap\Omega$ is nonempty. We denote respectively by $\Sigma'(\lambda)$ and $x^\lambda$, the reflection of the open cap $\Sigma(\lambda)$  and of the point $x$ in the hyperplane $T_{\lambda}$. At the beginning, $\Sigma'(\lambda)$ will be in $\Omega$ and as $\lambda$ decreases, the reflected cap $\Sigma'(\lambda)$ will remain in $\Omega$, at least until one of the following two events occurs:  
\begin{enumerate}[label=(\roman*)]
     \item $\Sigma'(\lambda)$ becomes internally tangent to $\partial \Omega$ at some point $\overline{x}$ not on $T_{\lambda}$,
     \item $T_{\lambda}$ reaches a position such that it is orthogonal to $\partial\Omega$ at some point $\overline{y}$.
 \end{enumerate}
We denote by $T_{\lambda_c}$ the hyperplane $T_{\lambda}$ when it first reaches one of these positions. Now, we recall in a special form a result proved by Gidas, Ni and Nirenberg in \cite[Theorem 2.1]{GNN}.  
\begin{teorema}\label{gnn1}
Let $\Omega\subset\R^n$ be an open, connected set of class $C^2$. Let $u\in C^2(\overline{\Omega})$, $u>0$ in $\Omega$, be a solution to
\begin{equation}\label{semprob}
\begin{cases}
-\Delta u=f(u) & \mbox{in } \Omega,\\ 
u=0 & \mbox{on } \partial\Omega,\\
\end{cases}
\end{equation}
 with $f=f_1+f_2$, where $f_1$ is locally Lipschitz in $\R$ and $f_2$ is non decreasing. Then 
  \begin{equation}\label{in1}
      \nabla u(x)\cdot \gamma <0, \quad u(x)< u(x^{\lambda})  \quad \forall\,x\in\Sigma(\lambda),\quad \forall\,\lambda\in(\lambda_0, \lambda_c).
  \end{equation}
Thus $\nabla u\cdot \gamma <0$ in $\Sigma(\lambda_c)$. Furthermore if  $\nabla u\cdot \gamma=0$ at some point on $\Omega\cap T_{\lambda_c}$ then necessarily $u$ and $\Omega$ are symmetric in the hyperplane $T_{\lambda_c}$. 
\end{teorema}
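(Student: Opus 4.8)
The plan is to reprove Theorem~\ref{gnn1} by the moving planes method, following \cite{GNN} (see also \cite{Serrin}). For each admissible $\lambda$ (that is, $\Sigma(\lambda)\neq\emptyset$ and $\Sigma'(\lambda)\subseteq\Omega$) set $w_\lambda(x)=u(x^\lambda)-u(x)$ on $\Sigma(\lambda)$. Since $\Delta$ commutes with the reflection in $T_\lambda$, the function $x\mapsto u(x^\lambda)$ solves $-\Delta(u(x^\lambda))=f(u(x^\lambda))$ on $\Sigma(\lambda)$, so $-\Delta w_\lambda=f(u(x^\lambda))-f(u(x))$. With $M=\max_{\overline\Omega}u$ and $0\le u,u(x^\lambda)\le M$, the splitting $f=f_1+f_2$ gives $f_1(u(x^\lambda))-f_1(u)=c_\lambda(x)\,w_\lambda$ with $\norma{c_\lambda}_{L^\infty}\le L$ ($L$ the Lipschitz constant of $f_1$ on $[0,M]$), while $f_2(u(x^\lambda))-f_2(u)\ge0$ wherever $w_\lambda\ge0$. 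Hence, wherever $w_\lambda\ge0$, $w_\lambda$ satisfies the linear elliptic inequality $\Delta w_\lambda+c_\lambda(x)\,w_\lambda\le0$ with bounded coefficient, and $w_\lambda\ge0$ on $\partial\Sigma(\lambda)$ (it vanishes on $T_\lambda\cap\overline\Omega$ and equals $u(x^\lambda)\ge0$ on $\partial\Omega\cap\overline{H_\lambda}$). On the bounded set $\Sigma(\lambda)$ the strong maximum principle and Hopf's boundary point lemma are available for nonnegative supersolutions of such operators, irrespective of the sign of the zeroth order coefficient.

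First I would get the procedure started. For $\lambda<\lambda_0$ close to $\lambda_0$ the cap $\Sigma(\lambda)$ clusters near the points $P\in\partial\Omega$ at which $\gamma\cdot x$ is maximal; at such $P$ the inner normal equals $-\gamma$, so Hopf's lemma applied to $u$ (which vanishes on $\partial\Omega$ and is positive inside) gives $0<\partial u/\partial\nu(P)=-\nabla u(P)\cdot\gamma$, i.e. $\nabla u\cdot\gamma<0$ at $P$ and, by continuity, on $\Sigma(\lambda)$ for $\lambda$ near $\lambda_0$. Integrating $\nabla u\cdot\gamma<0$ along a segment from $x$ to $x^\lambda$ contained in $\Omega$ (possible when $\lambda$ is close to $\lambda_0$) gives $u(x)<u(x^\lambda)$, i.e. $w_\lambda>0$ in $\Sigma(\lambda)$. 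Thus \eqref{in1} holds for $\lambda$ near $\lambda_0$.

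The continuation step is where I expect the main difficulty. Let $(\lambda_c,\lambda_0)$ be the largest interval on which $\Sigma'(\lambda)\subseteq\Omega$; by a standard argument this $\lambda_c$ is exactly the position at which the first of the events (i)--(ii) of Section~\ref{mmp} occurs. Let $\mu^*$ be the infimum of the $\lambda\in(\lambda_c,\lambda_0)$ for which $w_\mu\ge0$ in $\Sigma(\mu)$ for every $\mu\in(\lambda,\lambda_0)$; the previous step gives $\mu^*<\lambda_0$, and the aim is $\mu^*=\lambda_c$. Suppose $\mu^*>\lambda_c$. For each $\mu\in(\mu^*,\lambda_0)$ we have $w_\mu\ge0$ in $\Sigma(\mu)$ with $w_\mu\not\equiv0$ (a bounded domain is symmetric in at most one hyperplane orthogonal to $\gamma$, and such a hyperplane necessarily lies at a level $\le\lambda_c$), so $w_\mu>0$ in $\Sigma(\mu)$ by the strong maximum principle; Hopf's lemma at the flat part $T_\mu\cap\Omega$ of $\partial\Sigma(\mu)$, together with the identity $\nabla w_\lambda\cdot\gamma=-2\,\nabla u\cdot\gamma$ on $T_\lambda$, then gives $\nabla u\cdot\gamma<0$ on $T_\mu\cap\Omega$. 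Taking the union over $\mu\in(\mu^*,\lambda_0)$ yields $\nabla u\cdot\gamma<0$ on $\Sigma(\mu^*)$; by continuity $w_{\mu^*}\ge0$, hence $>0$, in $\Sigma(\mu^*)$, and Hopf at $T_{\mu^*}\cap\Omega$ adds $\nabla u\cdot\gamma<0$ there, so $\nabla u\cdot\gamma<0$ on $\overline{\Sigma(\mu^*)}\cap\Omega$. A continuity and compactness argument in the spirit of \cite{GNN} — combining this with the maximum principle for $w_\lambda$ on the thin slab $\{\lambda<\gamma\cdot x\le\mu^*\}\cap\Omega$ for $\lambda$ just below $\mu^*$ — then gives $w_\lambda\ge0$ in $\Sigma(\lambda)$ for such $\lambda$, contradicting the definition of $\mu^*$. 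Hence $\mu^*=\lambda_c$, so $w_\lambda>0$ and $\nabla u\cdot\gamma<0$ in $\Sigma(\lambda)$ for every $\lambda\in(\lambda_c,\lambda_0)$, which is \eqref{in1}; letting $\lambda\downarrow\lambda_c$ gives $\nabla u\cdot\gamma\le0$ and $w_{\lambda_c}\ge0$ in $\Sigma(\lambda_c)$. The truly delicate points are this ``no sticking'' limiting argument and the repeated use of the maximum and boundary point lemmas on $\Sigma(\lambda)$, whose boundary carries corners along $\overline\Omega\cap T_\lambda$ and must be treated with interior-ball and barrier arguments.

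Finally, I would establish the symmetry statement. If $\nabla u(P_0)\cdot\gamma=0$ at some $P_0\in\Omega\cap T_{\lambda_c}$, then $w_{\lambda_c}(P_0)=0$, $w_{\lambda_c}\ge0$ near $P_0$, and $\partial w_{\lambda_c}/\partial\gamma(P_0)=-2\,\nabla u(P_0)\cdot\gamma=0$; Hopf's lemma at $P_0$ forces $w_{\lambda_c}\equiv0$ on the connected component of $\Sigma(\lambda_c)$ containing $P_0$, that is, $u(x^{\lambda_c})=u(x)$ there. Since $u>0$ in $\Omega$ and $u=0$ on $\partial\Omega$, letting $x$ tend to a point of $\partial\Omega\cap H_{\lambda_c}$ forces its reflection to lie on $\partial\Omega$ as well; using that $\Omega$ is connected, this upgrades to full symmetry of $u$ and of $\Omega$ about $T_{\lambda_c}$, which is the final assertion.
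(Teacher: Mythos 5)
First, a point of reference: the paper does not prove Theorem \ref{gnn1} at all --- it is quoted (in a special form) from \cite[Theorem 2.1]{GNN}, so there is no internal proof to compare yours against. Judged on its own, your sketch follows exactly the Gidas--Ni--Nirenberg moving-planes route, and the skeleton is right: the linear differential inequality for $w_\lambda$ on the set where $w_\lambda\ge 0$, the identity $\nabla w_\lambda\cdot\gamma=-2\,\nabla u\cdot\gamma$ on $T_\lambda$, and the observation that the strong maximum principle and the boundary point lemma apply to nonnegative supersolutions regardless of the sign of the zeroth-order coefficient are all correct.

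Two points are genuine gaps rather than routine omissions. (a) To start the procedure you apply Hopf's lemma directly to $u$ at a point $P$ where $\gamma\cdot x$ is maximal. Hopf requires a one-signed differential inequality for $u$; writing $\Delta u+c(x)u=-f(0)-\bigl(f_2(u)-f_2(0)\bigr)\le -f(0)$, this works only when $f(0)\ge 0$. When $f(0)<0$ the conclusion $\nabla u\cdot\gamma<0$ near such boundary points is still true but requires the separate argument of \cite[Lemma 2.1]{GNN}; that lemma (valid at every boundary point where the outer normal has positive $\gamma$-component, not only at $P$) is also indispensable later, and you never state it. (b) The continuation step --- showing that $w_\lambda\ge 0$ persists for $\lambda$ slightly below $\mu^*$ --- is where the entire difficulty of the theorem lies, and you dispatch it in one sentence. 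To close it one must split $\Sigma(\lambda)$ into a compact core where $w_{\mu^*}\ge\delta>0$ propagates by continuity, a neighborhood of $\partial\Omega$ handled by the boundary lemma just mentioned, and a narrow residual region where a maximum principle for domains of small width (or small measure) applies irrespective of the sign of $c_\lambda$. As written, the central step is asserted, not proved; since the paper itself simply cites \cite{GNN} for the whole statement, your sketch is a fair reconstruction of that citation, but it is not yet a self-contained proof.
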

\begin{oss} \label{nonlin}
As specified in \cite{GNN}, Theorem \ref{gnn1} is verified for a class of nonlinear elliptic second order equations. With $\Omega$ as before, let $u\in C^2(\overline{\Omega})$ be a solution to the following equation
\begin{equation}\label{nonlinear}{F(x, u, u_{x_1},..., u_{x_n}, u_{x_1 x_1},..., u_{x_n x_n})=0 \quad \text{in }\Omega,}\end{equation}
which is elliptic in $\Omega$ if there exist positive constants $m, M$ such that
\begin{equation*}{m|\xi|^2\leq \sum_{i,j=1}^n F_{u_{ij}}(x)\xi_i\xi_j\leq M|\xi|^2 \quad \forall\, \xi\in\mathbb{R}^n,\quad \forall\, x\in\Omega.}\end{equation*}
The function $F(x, z, p_k, r_{i,j})$ satisfies the following conditions:
\begin{enumerate}
\item [a)] $F$ is continuous and it has continuous first derivative with respect to $z, p_k$ and $r_{i,j}$ for all values of these arguments, for every $x\in \Omega$;
\item [b)] the function $g(x)=F(x, 0,..., 0)$ verifies
\begin{equation}{g(x)\geq0 \quad\forall\, x\in\partial\Omega\cap\{x_1>\lambda_c\},\quad\text{or}\quad g(x)<0 \quad\forall x\in\partial \Omega\cap\{x_1>\lambda_c\};\nonumber}\end{equation}
\item [c)] for every $\lambda$ such that $\lambda_c\leq \lambda<\lambda_0$, for $x\in\Sigma(\lambda)$ and all values of the arguments $z, p_k, r_{i,j}$ with $z>0$ e $p_1<0$, it results
\begin{equation*}{F(x^\lambda, z, -p_1, p_2,..., p_n, r_{1,1}, -r_{1,\alpha},..., r_{\beta,\gamma})\geq F(x, z, p_1, p_2,..., p_n, r_{1,1}, r_{i,j}),\label{Fsimm}}\end{equation*}
where $i, j=1,... n$ and $\alpha, \beta, \gamma=2,... n$.
\end{enumerate}
\end{oss}
\noindent As a consequence of Theorem \ref{gnn1}, the following symmetry result holds \cite[Theorem 1]{GNN}.
\begin{teorema}\label{gnn2}
Let $f:\mathbb{R}\to\mathbb{R}$ be such that $f=f_1+f_2$, where  $f_1$ is a locally Lipschitz function and $f_2$ is non decreasing. Then, any positive solution $u\in C^2(\overline{B})$ to the problem
    \begin{equation*}
\begin{cases}
    -\Delta u= f(u) & \text{in } B,\\
    u=0 &\text{on } \partial  B,
\end{cases}
\end{equation*}
where $B$ is a ball of $\mathbb{R}^n$ of radius $R$, has to be radial and 
\begin{equation*}
\dfrac{\partial u}{\partial r}<0,\quad\text{for }\,0<r<R.
\end{equation*}
\end{teorema}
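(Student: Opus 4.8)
The plan is to observe that the hypotheses here are precisely those of Theorem \ref{gnn1} with $\Omega=B$, then to run the moving planes procedure of Section \ref{mmp} on $B$ in every direction, to use the symmetry of the ball to identify the critical hyperplane, and finally to let Theorem \ref{gnn1} do the rest. Assume without loss of generality that $B=B_R(0)$ and fix a unit vector $\gamma\in\R^n$. For $0<\lambda<R$ the reflection $x^\lambda$ of a point $x$ across $T_\lambda$ satisfies $\abs{x^\lambda}^2=\abs{x}^2-4\lambda\,(x\cdot\gamma-\lambda)$; applied to $x\in\overline{\Sigma(\lambda)}$ this shows that the reflected cap $\Sigma'(\lambda)$ stays inside $B$ and that $\overline{\Sigma'(\lambda)}$ meets $\partial B$ only along $T_\lambda$, so no internal tangency to $\partial B$ at a point off $T_\lambda$ can occur while $\lambda>0$. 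On the other hand $T_\lambda$ is orthogonal to $\partial B$ at a point $p$ only if $\gamma\cdot p=0$, which together with $p\in T_\lambda$ forces $\lambda=0$. Hence the moving plane reaches its critical position exactly at $\lambda=0$, where $T_0$ is orthogonal to $\partial B$ along the whole great sphere $\partial B\cap T_0$; that is, $T_{\lambda_c}=T_0$ is the hyperplane through the center orthogonal to $\gamma$.

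With the critical position identified, Theorem \ref{gnn1} gives $\nabla u\cdot\gamma<0$ on $\Sigma(\lambda_c)=\{x\cdot\gamma>0\}\cap B$ and, passing to the limit $\lambda\to0^+$ in \eqref{in1},
\begin{equation*}
u(x)\le u(x^0)\qquad\text{for every }x\in B\text{ with }x\cdot\gamma>0,
\end{equation*}
where $x^0$ denotes the reflection of $x$ across $\{z\cdot\gamma=0\}$. Running the same argument with $-\gamma$ in place of $\gamma$ — whose associated reflection is again $x\mapsto x^0$ — gives $u(y)\le u(y^0)$ for all $y$ with $y\cdot\gamma<0$; choosing $y=x^0$ yields $u(x^0)\le u(x)$, hence $u(x)=u(x^0)$. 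Thus $u$ is invariant under the reflection across every hyperplane through the origin. Now, if $x,y\in\overline B$ satisfy $\abs{x}=\abs{y}$, the perpendicular bisector of the segment $[x,y]$ passes through $0$ (precisely because $\abs{x}=\abs{y}$) and the reflection across it exchanges $x$ and $y$; therefore $u(x)=u(y)$, and $u$ is radially symmetric.

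For the monotonicity, fix $x\in B$ with $0<\abs{x}<R$ and apply the inequality $\nabla u\cdot\gamma<0$ on $\{x\cdot\gamma>0\}\cap B$ with the particular direction $\gamma=x/\abs{x}$; since $x\cdot\gamma=\abs{x}>0$ we obtain $0>\nabla u(x)\cdot\gamma=\partial u/\partial r\,(x)$. As the point was arbitrary, $\partial u/\partial r<0$ throughout $0<r<R$, which completes the proof.

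The step I expect to require the most care is the geometric claim of the first paragraph: for a ball one must check \emph{simultaneously} that no internal tangency of the reflected cap occurs before $\lambda=0$ and that $T_\lambda$ becomes orthogonal to $\partial B$ only at $\lambda=0$, so that the critical plane is forced to pass through the center. Once this is settled, the rest is a direct application of Theorem \ref{gnn1} together with the elementary remark that reflections across hyperplanes through a fixed point are enough to compare any two points equidistant from it.
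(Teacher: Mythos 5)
Your proof is correct and follows exactly the intended route: the paper itself gives no proof of this statement, citing it as \cite[Theorem 1]{GNN}, and the original derivation there is precisely your argument — identify $T_{\lambda_c}=T_0$ for the ball, apply Theorem \ref{gnn1} in the directions $\gamma$ and $-\gamma$ to get reflection symmetry across every hyperplane through the center, and read off $\partial u/\partial r<0$ from $\nabla u\cdot\gamma<0$ with $\gamma=x/|x|$. The geometric verification in your first paragraph and the limit passage $\lambda\to0^+$ are both handled correctly.
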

\noindent In \cite{GNN}, Theorem \ref{gnn1} is also extended to some domains with corners, by proving a refinement of the Hopf Boundary Lemma \cite[Lemma S]{GNN} reported below, which is a generalization of a result due to Serrin in \cite[Lemma 1]{Serrin}.
\begin{lemma}\label{hopf}
Let $\Omega\subset\R^n$ be an open, connected set with the origin $O$ on its boundary. Assume that near $O$ the boundary consists of two trasversally intersecting $C^2$ hypersurfaces $\rho=0$ and $\sigma=0$. Suppose $\rho, \sigma<0$ in $\Omega$. Let $w\in C^2(\overline{\Omega})$, with $w<0$ in $\Omega$, $w(O)=0$, satisfying the differential inequality
\begin{equation*}
      Lu=\sum_{i,j=1}^n a_{ij}(x)\frac{\partial^2 u}{\partial x_i\partial x_j}(x)+\sum_{i=1}^nb_i(x)\frac{\partial u}{\partial x_i}(x)+c(x)u(x)\geq 0\quad \text{in }\Omega,
  \end{equation*}
  where $L$ is a uniformly elliptic operator and its coefficients are uniformly bounded. Assume
  \begin{equation}\label{cond1}
      \sum_{i,j=1}^n a_{ij}\rho_{x_i}\sigma_{x_j}\geq 0 \quad \text{at O}.
  \end{equation}
  If this is $0$, assume furthermore that $a_{ij}\in C^2$ in $\overline{\Omega}$ near $O$, and that
   \begin{equation*}
     \sum_{i,j=1}^nD(a_{ij}\rho_{x_i}\sigma_{x_j})=0 \quad \text{at O},
  \end{equation*}
  for any first order derivative $D$ at $O$ tangent to the submanifold $\{\rho=0\}\cap\{\sigma=0\}$. Then, for any direction $s$ at $O$ which enters $\Omega$ trasversally to each hypersurface
  \begin{equation*}
    \frac{\partial w}{\partial s}<0 \quad \text{at O in case of strict inequality in \eqref{cond1}},
  \end{equation*}
  \begin{equation*}
    \frac{\partial w}{\partial s}<0\quad \text{or}\quad \frac{\partial^2 w}{\partial s^2}<0\quad \text{at O in case of equality in \eqref{cond1}}.
  \end{equation*}
\end{lemma}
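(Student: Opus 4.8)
The plan is to adapt the classical barrier proof of the Hopf boundary lemma to the corner at $O$, with a comparison function built from the defining functions $\rho,\sigma$. After the usual reduction we may assume $c\le 0$ near $O$ (e.g.\ replacing $w$ by $w\,e^{\alpha x_{1}}$ with $\alpha$ large, or simply absorbing the zeroth order term using $w\le 0$ and $w(O)=0$). Fix a small ball and set $G:=\Omega\cap B_{r}(O)$; since $w\in C^{2}(\overline{\Omega})$ is negative in $\Omega$ with $w(O)=0$, after shrinking $r$ we may also take $w<0$ on $\partial\Omega\cap\overline{B_{r}(O)}\setminus\{O\}$ and $w\le-\delta<0$ on $\Omega\cap\partial B_{r}(O)$ (the degenerate case in which $w$ has other boundary zeros near $O$ reduces to this one by localization).

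For the barrier take $\zeta=\rho\sigma$ when \eqref{cond1} is strict, and $\zeta=\rho\sigma+\kappa(\rho^{2}+\sigma^{2})$ with $\kappa>0$ small in the equality case. Then $\zeta>0$ in $G$, $\zeta(O)=0$, $\nabla\zeta(O)=0$, and
\[
L\zeta(O)=2\sum_{i,j}a_{ij}\rho_{x_{i}}\sigma_{x_{j}}(O)\ \Big(\,+\ 2\kappa\sum_{i,j}a_{ij}\bigl(\rho_{x_{i}}\rho_{x_{j}}+\sigma_{x_{i}}\sigma_{x_{j}}\bigr)(O)\Big)>0,
\]
strictly, by \eqref{cond1} in the strict case and by uniform ellipticity of $L$ (with $\nabla\rho(O),\nabla\sigma(O)\ne 0$) in the equality case; equivalently one may keep $\zeta=\rho\sigma$ and use the extra hypotheses — $a_{ij}\in C^{2}$ near $O$ and vanishing of the tangential derivative of $\sum a_{ij}\rho_{x_{i}}\sigma_{x_{j}}$ along the edge at $O$ — to control the sign of $L\zeta$ in a whole neighbourhood of $O$. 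Shrinking $r$ so that $L\zeta>0$ in $G$: on $\partial\Omega\cap\overline{B_{r}(O)}$ we have $\zeta=0$ (resp.\ $\zeta\ge 0$ and vanishing to second order along $\{\rho=0\}\cap\{\sigma=0\}$) while $w<0$ off $O$, so $w+\epsilon\zeta\le 0$ on $\partial G$ for $\epsilon>0$ small; since $L(w+\epsilon\zeta)\ge\epsilon L\zeta>0$ in $G$, the maximum principle gives $w\le-\epsilon\zeta$ in $G$.

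Let now $s$ enter $\Omega$ at $O$ transversally to both hypersurfaces, so $\nabla\rho(O)\cdot s<0$ and $\nabla\sigma(O)\cdot s<0$. Expanding $w(O+ts)\le-\epsilon\zeta(O+ts)$ as $t\downarrow 0$ and using $\zeta(O)=\nabla\zeta(O)=0$: the coefficient of $t$ is $\nabla w(O)\cdot s\le 0$ (because $w\le 0=w(O)$), and when it vanishes the coefficient of $t^{2}$ obeys $s^{\top}D^{2}w(O)\,s\le-\epsilon\,\partial^{2}_{s}\zeta(O)<0$, with $\partial^{2}_{s}\zeta(O)=2(\nabla\rho\cdot s)(\nabla\sigma\cdot s)(O)+\dots>0$. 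Hence for every admissible $s$, either $\partial_{s}w(O)<0$ or $\partial^{2}_{s}w(O)<0$, which is the conclusion in the equality case. In the strict case one upgrades: $w\le 0$ and $w(O)=0$ force $\nabla w(O)\cdot s\le 0$ for all $s$ pointing into $\overline{\Omega}$, hence $\nabla w(O)=a\,\nabla\rho(O)+b\,\nabla\sigma(O)$ with $a,b\ge 0$; if $\nabla w(O)=0$, then letting $\Omega\ni x\to O$ in $Lw\ge 0$ yields $\sum a_{ij}w_{x_{i}x_{j}}(O)\ge 0$, which an elementary linear-algebra estimate rules out, since $s^{\top}D^{2}w(O)s<0$ on the open cone of inward directions and that cone is ``wide'' (an obtuse dihedral angle in the metric $a_{ij}$) precisely because of the strict sign in \eqref{cond1}. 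So $\nabla w(O)\ne 0$, whence $a+b>0$ and $\partial_{s}w(O)<0$.

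I expect the equality case of \eqref{cond1} to be the main obstacle: there $\rho\sigma$ is a solution of $L$ only at $O$, so the comparison does not close without additional input, and the role of the extra hypotheses (or, equivalently, of the admissibility of the correction $\kappa(\rho^{2}+\sigma^{2})$ together with checking that $w+\epsilon\zeta\le 0$ still holds on $\partial\Omega$ near $O$ although $\zeta\not\equiv0$ there) is exactly to restore a strict sign of $L\zeta$ on a genuine neighbourhood of $O$. Consistently, in that case only the second-derivative conclusion can be expected, as elementary examples show.
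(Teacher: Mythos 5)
First, a point of comparison: the paper does not prove this lemma at all --- it is quoted as Lemma S of [GNN] (itself a refinement of Lemma 1 of [Serrin]) and used as a black box. So there is no in-paper proof to measure yours against; I can only assess your barrier argument on its own terms. Its skeleton is the classical one (compare $w$ with $-\epsilon\rho\sigma$ in a corner neighbourhood, then Taylor-expand along $s$), and the pointwise computations are right: $L(\rho\sigma)(O)=2\sum_{i,j}a_{ij}\rho_{x_i}\sigma_{x_j}(O)$ and $\partial_s^2(\rho\sigma)(O)=2(\nabla\rho(O)\cdot s)(\nabla\sigma(O)\cdot s)>0$ for transversal entering $s$. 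The genuine gap is in the boundary comparison $w+\epsilon\zeta\le 0$ on $\partial G$, $G=\Omega\cap B_r(O)$, which is exactly where the lemma is hard. On the spherical part $\Omega\cap\partial B_r(O)$ you assert $w\le-\delta<0$; but this set is not compactly contained in $\Omega$ (its closure meets $\partial\Omega$), so no such $\delta$ exists. The parenthetical claim that the case where $w$ has further boundary zeros near $O$ ``reduces to this one by localization'' is unsupported and is in fact false in the paper's own application of the lemma, where $w=u_j^{\lambda_c}-u_j$ vanishes identically on one of the two hypersurfaces (the hyperplane $T_{\lambda_c}$). Near the seam $\partial B_r\cap\partial\Omega$ both $w$ and $\rho\sigma$ tend to $0$, so one needs a quantitative lower bound of the type $|w|\ge c\,d(x,\partial\Omega)$ there (obtained from the ordinary Hopf lemma at smooth boundary points, with uniformity), and for $n\ge 3$ one must separately control the region where $\partial B_r$ crosses the edge $\{\rho=0\}\cap\{\sigma=0\}$, where no Hopf estimate is yet available. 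This delicate matching is the technical core of Serrin's Lemma 1 and of Lemma S in [GNN], and it is missing here.

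The equality case of \eqref{cond1} is, as you yourself admit, not closed. With $\zeta=\rho\sigma+\kappa(\rho^2+\sigma^2)$ you gain $L\zeta(O)>0$ but lose the boundary inequality: now $\zeta>0$ on $\partial\Omega$ away from the edge, while you only know $w\le 0$ there, so $w+\epsilon\zeta\le 0$ on $\partial G$ cannot be asserted. Keeping $\zeta=\rho\sigma$ instead, the extra hypotheses (that the tangential first derivatives of $\sum a_{ij}\rho_{x_i}\sigma_{x_j}$ vanish at $O$) control only part of the first-order Taylor expansion of $L\zeta$ at $O$ and do not yield $L\zeta\ge 0$ on a neighbourhood; the known proofs must correct the barrier by a higher-order term, which is precisely why only the weaker alternative $\partial_s w<0$ or $\partial_s^2 w<0$ survives in that case. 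Finally, your concluding step in the strict case --- ruling out $\nabla w(O)=0$ by writing $\sum a_{ij}w_{x_ix_j}(O)$ as a positive combination of $s^{\top}D^2w(O)s$ over transversal directions, using that the wedge is obtuse in the metric $a_{ij}$ --- is the right idea but is only gestured at (and passing to the limit in $Lw\ge 0$ tacitly uses continuity of the coefficients at $O$). In short: correct strategy and correct infinitesimal computations, but the two decisive steps (the comparison on the non-compact part of $\partial G$, and the equality case) are not proved.
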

\subsection{The domain derivative}\label{domder}
Fixed a bounded and open set $\Omega\subset \mathbb{R}^n$, the torsional rigidity of $\Omega$ is
\begin{align*}
    T(\Omega)=\int_{\Omega}u_{\Omega}\,dx,
\end{align*}
where $u_{\Omega}$ is the torsion function of $\Omega$, that is the unique solution of the boundary value problem
\begin{equation*}
\begin{cases}
-\Delta u=1 & \mbox{in } \Omega,\\ 
u\in W^{1,2}_0(\Omega).\\
\end{cases}
\end{equation*}
Let $\lambda_1(\Omega)$ be the first eigenvalue of the Laplacian with Dirichlet boundary conditions. It is known that $\lambda_1(\Omega)>0$, we assume it is simple and we indicate by $v_{\Omega}$ the normalized corresponding eigenfunction, which is the solution to
\begin{equation*}
\begin{cases}
-\Delta v=\lambda_1(\Omega)v & \mbox{in } \Omega,\\ 
v\in W^{1,2}_0(\Omega),\\
\end{cases}
\end{equation*}
such that $\|v_{\Omega}\|_{L^2(\Omega)}=1$.\\

Next, we recall the first order shape derivative of the two functionals $T(\Omega), \lambda_1(\Omega)$.\\ Let $\theta\in W^{1, \infty}(\mathbb{R}^n, \mathbb{R}^n)$ be a Lipschitz continuous function. For given $t_0>0$, we denote by $\{\Omega_t\}_{0\leq t<t_0}$ a family of perturbations of the domain $\Omega$ of the form \begin{equation*} \Omega_t = (I+t\theta)(\Omega),
\end{equation*} 
where $I$ is the identity in $\mathbb{R}^n$. 
\begin{oss}
If $t_0$ is small enough, then $\Phi(t,x)=I(x)+t\theta(x)$ is a bi-Lipschitz homeomorphism for every $0\leq t<t_0$. Accordingly, $\Omega_t$ is measurable and open if and only if $\Omega$ is measurable and open. 
\end{oss}
\begin{definizione}
The derivative of a given functional $\mathcal{J}$ at $\Omega$ in the direction $\theta$ is defined as
\begin{equation}\label{shapeder} d\mathcal{J}(\Omega, \theta) := \lim_{t\to 0^+}\frac{\mathcal{J}(\Omega_t)-\mathcal{J}(\Omega)}{t},
\end{equation}
if the limit exists. 
\end{definizione}
\noindent For the next two results, we refer to \cite[Corollary 5.3.8, Theorem 5.7.1]{Henrot2}.
\begin{teorema}\label{torsion} Let $\Omega$ be an open, bounded and of class $C^1$ set. Then
\begin{equation*} dT(\Omega, \theta)=\int_{\partial\Omega}|\nabla u_{\Omega}|^2\theta\cdot n\,d\,\mathcal{H}^{n-1},
\end{equation*}
where $n$ denotes the outer unit normal to $\partial\Omega$.
\end{teorema}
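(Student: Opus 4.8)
The plan is to reduce the whole computation to the differentiation of a parametrised maximum over the \emph{fixed} space $W^{1,2}_0(\Omega)$. I would start from the variational characterisation of the torsional rigidity,
\begin{equation*}
T(\Omega)=\max\left\{\,2\int_\Omega v\,dx-\int_\Omega|\nabla v|^2\,dx\ :\ v\in W^{1,2}_0(\Omega)\,\right\},
\end{equation*}
whose maximiser is $v=u_\Omega$ (the functional is strictly concave, bounded above by Poincar\'e's inequality, and its Euler--Lagrange equation is $-\Delta v=1$), with maximal value $\int_\Omega|\nabla u_\Omega|^2\,dx=\int_\Omega u_\Omega\,dx=T(\Omega)$.

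Next I would pull the perturbed problem back to $\Omega$ through $\Phi_t(x):=x+t\theta(x)$. Setting $J_t:=\det(I+tD\theta)$ and $A_t:=J_t\,(I+tD\theta)^{-1}(I+tD\theta)^{-T}$, the change of variables $y=\Phi_t(x)$ turns $w\in W^{1,2}_0(\Omega_t)$ into $v:=w\circ\Phi_t\in W^{1,2}_0(\Omega)$ and yields
\begin{equation*}
2\int_{\Omega_t}w\,dy-\int_{\Omega_t}|\nabla w|^2\,dy=\underbrace{2\int_\Omega v\,J_t\,dx-\int_\Omega A_t\nabla v\cdot\nabla v\,dx}_{=:G(t,v)},
\end{equation*}
so that $T(\Omega_t)=\max_{v\in W^{1,2}_0(\Omega)}G(t,v)$. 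For $|t|$ small the matrix field $A_t$ is uniformly elliptic (since $A_0=I$ and $t\mapsto A_t$ is continuous), hence the maximum is attained at a unique $v_t$, namely the weak solution of $-\div(A_t\nabla v_t)=J_t$ in $\Omega$ with $v_t\in W^{1,2}_0(\Omega)$, and $v_0=u_\Omega$. Moreover $t\mapsto J_t$ and $t\mapsto A_t$ are smooth with $\partial_tJ_t|_{t=0}=\div\theta$ and $\partial_tA_t|_{t=0}=(\div\theta)I-D\theta-(D\theta)^{T}$.

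To differentiate $t\mapsto T(\Omega_t)$ at $t=0^+$ I would sandwich the difference quotient: using $u_\Omega$ as a competitor for $\Omega_t$ gives $T(\Omega_t)\ge G(t,u_\Omega)$, whence $\liminf_{t\to0^+}\frac{T(\Omega_t)-T(\Omega)}{t}\ge\partial_tG(0,u_\Omega)$; using $v_t$ as a competitor for $\Omega$ gives $T(\Omega)\ge G(0,v_t)$, whence $\frac{T(\Omega_t)-T(\Omega)}{t}\le\frac{G(t,v_t)-G(0,v_t)}{t}$, and since $v_t\to u_\Omega$ in $W^{1,2}_0(\Omega)$ as $t\to0$ (standard elliptic continuity from the uniform ellipticity of $A_t$) the last quantity tends to $\partial_tG(0,u_\Omega)$. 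Hence $dT(\Omega,\theta)$ exists and equals
\begin{equation*}
\partial_tG(0,u_\Omega)=2\int_\Omega u_\Omega\,\div\theta\,dx-\int_\Omega\Big[(\div\theta)\,|\nabla u_\Omega|^2-2\,(D\theta\,\nabla u_\Omega)\cdot\nabla u_\Omega\Big]\,dx
\end{equation*}
(equivalently one may invoke an envelope/Danskin theorem, using $\partial_vG(0,u_\Omega)=0$). Finally I would convert this into a boundary integral by integration by parts, using $-\Delta u_\Omega=1$ in $\Omega$ and $u_\Omega=0$ on $\partial\Omega$ (so $\nabla u_\Omega=\frac{\partial u_\Omega}{\partial n}\,n$ and $|\nabla u_\Omega|^2=\big(\frac{\partial u_\Omega}{\partial n}\big)^2$ there): the first term becomes $-2\int_\Omega\theta\cdot\nabla u_\Omega$; integrating $\sum_{i,j}\int_\Omega\partial_j\theta_i\,\partial_iu_\Omega\,\partial_ju_\Omega$ by parts in $x_j$, using $\sum_j\partial_j(\partial_ju_\Omega\,\partial_iu_\Omega)=(\Delta u_\Omega)\partial_iu_\Omega+\tfrac12\partial_i|\nabla u_\Omega|^2$ with $-\Delta u_\Omega=1$, and rewriting $\int_\Omega\theta\cdot\nabla|\nabla u_\Omega|^2=\int_{\partial\Omega}|\nabla u_\Omega|^2(\theta\cdot n)-\int_\Omega|\nabla u_\Omega|^2\,\div\theta$, all volume contributions cancel and
\begin{equation*}
dT(\Omega,\theta)=\int_{\partial\Omega}|\nabla u_\Omega|^2\,(\theta\cdot n)\,d\mathcal{H}^{n-1}.
\end{equation*}

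The step I expect to be the main obstacle is the differentiation of the parametrised maximum: one must justify the convergence $v_t\to u_\Omega$ in $W^{1,2}_0(\Omega)$ (equivalently that the difference quotients converge), which rests on the uniform ellipticity of $A_t$ for small $t$ and on uniqueness of the maximiser. A secondary technical point is boundary regularity: the divergence theorem and the identity $|\nabla u_\Omega|=|\partial u_\Omega/\partial n|$ on $\partial\Omega$ require $u_\Omega\in C^1(\overline{\Omega})$, or at least an $L^2(\partial\Omega)$ trace of $\nabla u_\Omega$, which follows from elliptic regularity for the $C^1$ domain $\Omega$. An alternative, more classical route would be to prove shape differentiability of $t\mapsto u_{\Omega_t}$, identify the shape derivative $u'$ as the harmonic function in $\Omega$ with $u'=-(\partial u_\Omega/\partial n)(\theta\cdot n)$ on $\partial\Omega$, apply Hadamard's formula $dT(\Omega,\theta)=\int_\Omega u'\,dx+\int_{\partial\Omega}u_\Omega\,(\theta\cdot n)\,d\mathcal H^{n-1}=\int_\Omega u'\,dx$, and then use Green's identity together with $\Delta u'=0$ and $u_\Omega=0$ on $\partial\Omega$ to obtain $\int_\Omega u'\,dx=-\int_{\partial\Omega}u'\,\frac{\partial u_\Omega}{\partial n}\,d\mathcal H^{n-1}=\int_{\partial\Omega}|\nabla u_\Omega|^2(\theta\cdot n)\,d\mathcal H^{n-1}$.
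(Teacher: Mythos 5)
The paper does not prove this statement at all --- it is quoted directly from \cite[Corollary 5.3.8]{Henrot2} --- so there is no internal proof to compare against; your argument is a correct, self-contained version of the standard proof given in that reference (variational characterisation of $T$, pull-back to the fixed domain, differentiation of the parametrised maximum, and integration by parts using $-\Delta u_\Omega=1$ and $\nabla u_\Omega=\frac{\partial u_\Omega}{\partial n}n$ on $\partial\Omega$), and the final bookkeeping of the volume terms does cancel as you claim. The one point you rightly flag as delicate is the boundary regularity: under the stated $C^1$ assumption the identification of the distributional expression with the boundary integral $\int_{\partial\Omega}|\nabla u_\Omega|^2\,\theta\cdot n\,d\mathcal{H}^{n-1}$ requires a trace of $\nabla u_\Omega$ on $\partial\Omega$, which is exactly the technical issue the cited reference addresses; for the application in this paper $\Omega$ is $C^{2,\gamma}$, so $u_\Omega\in C^{2,\gamma}(\overline{\Omega})$ and the issue disappears.
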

\begin{teorema}\label{firsteigen}
Let $\Omega$ be an open, bounded and of class $C^2$ set. If $\lambda_1 (\Omega)$ is a simple eigenvalue, then 
\begin{equation*} d\lambda_1(\Omega, \theta)=-\int_{\partial\Omega}|\nabla v_{\Omega}|^2\theta\cdot n\,d\,\mathcal{H}^{n-1}.
\end{equation*}
\end{teorema}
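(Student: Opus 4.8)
\emph{Strategy.} This is the classical Hadamard formula, and I would obtain it by transporting the eigenvalue problem to the fixed domain $\Omega$, differentiating the resulting identities in $t$, and then turning the bulk expression into a boundary integral by a Rellich--Poho\v{z}aev manipulation. Write $\Phi_t=I+t\theta$, let $(\lambda_1(\Omega_t),v_{\Omega_t})$ be the first Dirichlet eigenpair on $\Omega_t$ normalized by $\norma{v_{\Omega_t}}_{L^2(\Omega_t)}=1$, and set $\tilde v_t:=v_{\Omega_t}\circ\Phi_t\in W^{1,2}_0(\Omega)$. A change of variables rewrites the eigenvalue problem on $\Omega_t$ as
\[
\int_\Omega A_t\nabla\tilde v_t\cdot\nabla\varphi\,dx=\lambda_1(\Omega_t)\int_\Omega J_t\,\tilde v_t\varphi\,dx\quad\forall\,\varphi\in W^{1,2}_0(\Omega),\qquad\int_\Omega J_t\,\tilde v_t^{\,2}\,dx=1,
\]
where $J_t=\det D\Phi_t$ and $A_t=J_t(D\Phi_t)^{-1}(D\Phi_t)^{-\top}$ are smooth in $t$ near $0$, with $A_0=I$, $J_0=1$, $\dot J(0)=\div\theta$ and $\dot A(0)=:A'=(\div\theta)I-D\theta-D\theta^{\top}$. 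Since $\lambda_1(\Omega)$ is simple it is an isolated, simple eigenvalue of the whole $t$-family of operators, so by analytic perturbation theory (equivalently, by the implicit function theorem applied to the pair ``equation $+$ normalization'') the maps $t\mapsto\lambda_1(\Omega_t)$ and $t\mapsto\tilde v_t$ are differentiable at $t=0$; set $d\lambda_1(\Omega,\theta)=\tfrac{d}{dt}\lambda_1(\Omega_t)\big|_{t=0}$ and $\dot v=\tfrac{d}{dt}\tilde v_t\big|_{t=0}\in W^{1,2}_0(\Omega)$.

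\emph{Differentiating the identities.} Taking $\varphi=\tilde v_t$ in the weak formulation and using the normalization gives $\lambda_1(\Omega_t)=\int_\Omega A_t\nabla\tilde v_t\cdot\nabla\tilde v_t\,dx$; differentiating at $t=0$,
\[
d\lambda_1(\Omega,\theta)=\int_\Omega A'\nabla v_\Omega\cdot\nabla v_\Omega\,dx+2\int_\Omega\nabla v_\Omega\cdot\nabla\dot v\,dx .
\]
Since $\dot v\in W^{1,2}_0(\Omega)$, the weak eigenvalue equation for $v_\Omega$ yields $\int_\Omega\nabla v_\Omega\cdot\nabla\dot v\,dx=\lambda_1(\Omega)\int_\Omega v_\Omega\dot v\,dx$, while differentiating $\int_\Omega J_t\tilde v_t^{\,2}\,dx=1$ at $t=0$ gives $\int_\Omega(\div\theta)v_\Omega^2\,dx+2\int_\Omega v_\Omega\dot v\,dx=0$. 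Combining, and using $A'\nabla v_\Omega\cdot\nabla v_\Omega=(\div\theta)|\nabla v_\Omega|^2-2\,(D\theta\nabla v_\Omega)\cdot\nabla v_\Omega$, the material-derivative terms are eliminated and
\[
d\lambda_1(\Omega,\theta)=\int_\Omega\bigl[(\div\theta)|\nabla v_\Omega|^2-2\,(D\theta\,\nabla v_\Omega)\cdot\nabla v_\Omega-\lambda_1(\Omega)\,(\div\theta)\,v_\Omega^2\bigr]\,dx .
\]

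\emph{From bulk to boundary.} By Schauder estimates $v_\Omega\in C^2(\overline{\Omega})$, so one may integrate by parts freely. Multiplying $-\Delta v_\Omega=\lambda_1(\Omega)v_\Omega$ by $\theta\cdot\nabla v_\Omega$ and integrating over $\Omega$, the left-hand side becomes, after integration by parts and simplification of the boundary terms using $v_\Omega=0$ on $\partial\Omega$ (hence $\nabla v_\Omega=(\partial v_\Omega/\partial n)\,n$ there),
\[
\int_\Omega(-\Delta v_\Omega)(\theta\cdot\nabla v_\Omega)\,dx=\int_\Omega\Bigl[(D\theta\nabla v_\Omega)\cdot\nabla v_\Omega-\tfrac12(\div\theta)|\nabla v_\Omega|^2\Bigr]\,dx-\tfrac12\int_{\partial\Omega}|\nabla v_\Omega|^2\,\theta\cdot n\,d\mathcal{H}^{n-1},
\]
whereas the right-hand side equals $\lambda_1(\Omega)\int_\Omega\theta\cdot\nabla\bigl(\tfrac12 v_\Omega^2\bigr)\,dx=-\tfrac12\lambda_1(\Omega)\int_\Omega(\div\theta)v_\Omega^2\,dx$ (again $v_\Omega=0$ on $\partial\Omega$). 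Equating these two expressions, multiplying through by $-2$ and rearranging, the bulk integral displayed at the end of the previous paragraph is seen to equal $-\int_{\partial\Omega}|\nabla v_\Omega|^2\,\theta\cdot n\,d\mathcal{H}^{n-1}$, which proves the formula.

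\emph{Main difficulty.} The computational steps are routine; the substantive point is the differentiability (in fact analyticity) at $t=0$ of $t\mapsto(\lambda_1(\Omega_t),\tilde v_t)$, which is precisely where simplicity of $\lambda_1(\Omega)$ and the $C^2$ regularity of $\partial\Omega$ are genuinely used, together with $v_\Omega\in C^2(\overline{\Omega})$ for the Rellich identity. These facts are standard and available in \cite{Henrot2}, which also records an alternative route via the Eulerian shape derivative $v':=\dot v-\nabla v_\Omega\cdot\theta$: it solves $-\Delta v'=\lambda_1(\Omega)v'+d\lambda_1(\Omega,\theta)v_\Omega$ in $\Omega$ with boundary datum $v'=-(\partial v_\Omega/\partial n)(\theta\cdot n)$ on $\partial\Omega$, and testing this equation against $v_\Omega$ (using $v_\Omega=0$ on $\partial\Omega$ and $\norma{v_\Omega}_{L^2(\Omega)}=1$) gives $d\lambda_1(\Omega,\theta)=\int_{\partial\Omega}v'\,(\partial v_\Omega/\partial n)\,d\mathcal{H}^{n-1}$ and hence the same conclusion. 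Throughout, $n$ denotes the outer unit normal, opposite to the convention used for $\nu$ in Theorem~\ref{main}.
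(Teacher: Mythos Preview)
Your derivation is correct: the transport to the fixed domain, the computation of $\dot A(0)$ and $\dot J(0)$, the elimination of the material derivative via the eigenvalue equation and the normalization, and the Rellich--Poho\v{z}aev identity all check out line by line, and the final boundary formula follows. The alternative route through the Eulerian derivative $v'$ that you sketch at the end is also standard and valid.

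There is nothing to compare against, however: the paper does not supply a proof of this statement. It is recorded as a known result, with a reference to \cite[Theorem 5.7.1]{Henrot2} (see the sentence ``For the next two results, we refer to \cite[Corollary 5.3.8, Theorem 5.7.1]{Henrot2}'' preceding Theorems~\ref{torsion} and~\ref{firsteigen}). What you have written is essentially the argument one finds in that reference, so your proposal is consistent with --- and more detailed than --- what the paper provides.
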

\noindent Now, we consider the volume functional $V(\Omega)=|\Omega|$, which gives the Lebesgue measure of $\Omega$. For a perturbation $\Phi$ of the type above, it results \cite[Section 2.3]{Bandle1} 
\begin{equation*}dV(\Omega, \theta)=\int_{\Omega}\text{div}\,\theta\,dx.
\end{equation*}
\begin{definizione}
The perturbation $\Phi(t, x)$ is called volume preserving if 
\begin{equation}\label{volpres}
\int_{\Omega}\text{div}\,\theta\,dx=0.
\end{equation}
\end{definizione}
\begin{definizione}
A set $\Omega$ is said a critical shape under volume constraint for a functional $\mathcal{J}$ if \eqref{shapeder} vanishes for every choice of $\theta$ satisfying \eqref{volpres}.
\end{definizione}

\section{Proof of the main theorem}\label{mt}

\begin{proof}[Proof of Theorem \ref{main}]
Let fix a direction $\gamma$ and let apply the procedure described before, until the hyperplane $T_{\lambda}$ reaches a critical position. The goal is to prove that $\Omega$ is symmetric with respect to the hyperplane $T_{\lambda_c}$. Once this fact is proved then the statement follows, since for every direction $\gamma$, $\Omega$ would be symmetric with respect to the hyperplane normal to $\gamma$. Moreover, according to the construction, $\Omega$ would also be simply connected, then it has to be a ball. In the end, an application of Theorem \ref{gnn2} shows that $u_i$ is radially symmetric and non increasing, for all $i=1,..., m$.\\ We set $u_i^{\lambda_c}$ the maps defined in $\Sigma'(\lambda_c)$ by
\begin{equation*}
u_i^{\lambda_c}(x)=u_i(x^{\lambda_c})\quad x\in\Sigma'(\lambda_c),\quad \forall\, i\in\{1,..., m\}.
\end{equation*}
The functions $u_i^{\lambda_c}$ satisfy
\begin{equation*}
\begin{cases}
-\Delta u_i^{\lambda_c}=f_i(u_i^{\lambda_c}) & \mbox{in } \Sigma'(\lambda_c),\\
u_i=u_i^{\lambda_c}&\mbox{on }\partial\Sigma'(\lambda_c)\cap\partial H_{\lambda_c},\\  u_i^{\lambda_c}=0 & \mbox{on } \partial\Sigma'(\lambda_c)\setminus\partial H_{\lambda_c},\\
F\left(\frac{\partial u_1^{\lambda_c}}{\partial \nu},..., \frac{\partial u_m^{\lambda_c}}{\partial \nu}\right)=c& \mbox{on }\partial\Sigma'(\lambda_c)\setminus\partial H_{\lambda_c}.
\end{cases}
\end{equation*}
Since $\Sigma'(\lambda_c)$ is contained in $\Omega$, it's possible to consider $u_i-u_i^{\lambda_c}$. As a consequence of Theorem \ref{gnn1}, $u_i-u_i^{\lambda_c}\geq 0$ and it solves a linear differential inequality
\begin{equation*}
 0\geq h_i(u_i^{\lambda_c})-h_i(u_i)=\Delta (u_i-u_{\lambda_c})+g_i(u_i)-g_i(u_i^{\lambda_c})=\Delta (u_i-u_i^{\lambda_c})+c_i(u_i-u_i^{\lambda_c}),
\end{equation*}
where
\begin{equation*}{c_i(x)=\begin{cases}\dfrac{g_i(u_i(x))-g_i(u_i^{\lambda_c}(x))}{u_i(x)-u_i^{\lambda_c}(x)} & \mbox{if }u(x)\neq u_i^{\lambda_c}(x),\\ 0 & \mbox{if }u(x)=u_i^{\lambda_c}(x).\end{cases}\nonumber}\end{equation*}
An application of the strong maximum principle gives either
\begin{equation*}
u_i-u_i^{\lambda_c}>0\quad \text{or}\quad u_i-u_i^{\lambda_c}\equiv 0\quad \text{in }\Sigma'(\lambda_c).
\end{equation*}
The latter case would imply that $\Omega$ is symmetric about $T(\lambda_c)$. Suppose that case $i)$ occurs, that is $\Sigma'(\lambda_c)$ is internally tangent to $\partial\Omega$ at a point $\overline{x}$ not belonging to $T_{\lambda_c}$ and assume by contradiction that $u_i-u_i^{\lambda_c}>0$ in $\Sigma'(\lambda_c)$ for all $i=1,..., m$. Then Hopf Lemma ensures that 
\begin{equation*}
\frac{\partial (u_i-u_i^{\lambda_c})}{\partial \nu(\overline{x})}>0.
\end{equation*}
Hence, we get
\begin{equation*}
\frac{\partial u_i}{\partial \nu(\overline{x})}>\frac{\partial u_i^{\lambda_c}}{\partial \nu(\overline{x})}\geq 0.
\end{equation*}
Using the hypothesis on the function $F$, we get a contradiction
\begin{equation*}
c=F\left(\frac{\partial u_1^{\lambda_c}}{\partial \nu(\overline{x})},..., \frac{\partial u_m^{\lambda_c}}{\partial \nu(\overline{x})}\right)<F\left(\frac{\partial u_1}{\partial \nu(\overline{x})},..., \frac{\partial u_m}{\partial \nu(\overline{x})}\right)=c.
\end{equation*}
In the case $ii)$, the proof makes use of Lemma \ref{hopf}. The goal is to prove that $u_i-u_i^{\lambda_c}$ have in $\overline{y}$ a second order zero, for some $i\in\{1,..., m\}$. Now, we fix a coordinate system with the origin at $\overline{y}$, the $x_n$ axis in the direction of the inward normal to $\partial \Omega$ at $\overline{y}$ and the $x_1$ axis in the direction of $\gamma$, that is normal to $T_{\lambda_c}$. In this coordinates system, the boundary of $\Omega$ is locally described by 
\begin{equation*}
x_n=\phi(x_1, x_2,..., x_{n-1}),\quad \phi\in C^2.
\end{equation*}
By construction, the function $\phi$ satisfies the following properties \begin{equation}\label{phi}
\nabla\phi(0)=0,\quad \frac{\partial^2 \phi}{\partial x_1 \partial x_j}(0)=0\quad \text{for }j=2,..., n-1.
\end{equation}
Indeed, $\nu(0)=(-\nabla\phi(0), 1)=(0,..., 0, 1)$ which gives the former in \eqref{phi}. Moreover, $\frac{\partial \phi}{\partial x_1}$ has an extremum point at $0$ with respect to all but the first coordinates directions, since $\Sigma'(\lambda_c)\subseteq \Omega$. Let us set
\begin{equation*}
u_i^{\lambda_c}(x_1, x_2,..., x_n)=u_i(-x_1, x_2,..., x_n).
\end{equation*}
In order to prove that all the first and second derivatives of $u_i$ and $u_i^{\lambda_c}$ coincide at $0$, it is enough to show
\begin{equation*}
\frac{\partial u_i}{\partial x_1}(0)=0,\quad \frac{\partial^2 u_i}{\partial x_1 \partial x_j}(0)=0\quad\text{for }j=2,..., n.
\end{equation*}
Since $u_i\in C^2$, the boundary condition $u_i=0$ on $\partial\Omega$ can be written as
\begin{equation}\label{diri}
u_i(x_1, x_2,..., x_{n-1}, \phi)=0\quad \text{for }i=1,..., m.
\end{equation}
Differentiating \eqref{diri} with respect to $x_j$ for $j=1,..., n-1$, we have
\begin{equation}\label{diri2}
\frac{\partial u_i}{\partial x_j}+\frac{\partial u_i}{\partial x_n}\frac{\partial \phi}{\partial x_j}=0.
\end{equation}
Evaluating \eqref{diri2} at $0$ and for $j=1,..., n-1$, it results $\frac{\partial u_i}{\partial x_j}(0)=0$. 
Differentiating \eqref{diri2} with respect to $x_k$ for $k=1,..., n-1$, we get 
\begin{equation}\label{diri3}
    \frac{\partial^2 u_i}{\partial x_j\partial x_k}+\frac{\partial^2 u_i}{\partial x_j \partial x_n}\frac{\partial \phi}{\partial x_k}+\frac{\partial^2 u_i}{\partial x_n \partial x_k}\frac{\partial \phi}{\partial x_j}+\frac{\partial^2 u_i}{\partial x_n \partial x_n}\frac{\partial \phi}{\partial x_j}\frac{\partial \phi}{\partial x_k}+\frac{\partial u_i}{\partial x_n}\frac{\partial^2 \phi}{\partial x_j\partial x_k}=0.
\end{equation}
Evaluating \eqref{diri3} at $0$, for $j=1$ and $k=2,..., n-1$, it results $\frac{\partial^2 u_i}{\partial x_1 \partial x_k}(0)=0$. 
If it exists $j$ such that $\frac{\partial u_j}{\partial x_n}(0)=0$, then the normal derivative $\frac{\partial u_j}{\partial\nu}$ has a minimum in $0$. Since
\begin{equation*}\label{dernorm}\frac{\partial u_j}{\partial\nu}(x_1, x_2,..., x_{n-1}, \phi)=\nabla u_j(x_1,..., x_{n-1}, \phi)\cdot \frac{(-\nabla \phi(x_1,..., x_{n-1}), 1)}{\sqrt{1+|\nabla \phi(x_1,..., x_{n-1})|^2}},\end{equation*}
a straightforward calculation shows that
\begin{align}\label{derx1}&\frac{\partial}{\partial x_1}\left(\frac{\partial u_j}{\partial \nu}(x_1,..., x_{n-1}, \phi)\right)=\Biggl(\,\frac{\partial^2 u_j}{\partial x_n\partial x_1}+\frac{\partial^2 u_j}{\partial x_n\partial x_n}\frac{\partial \phi}{\partial x_1}-\sum_{k=1}^{n-1}\frac{\partial^2 u_j}{\partial x_k\partial x_1}\frac{\partial \phi}{\partial x_k}-\sum_{k=1}^{n-1}\frac{\partial^2 u_j}{\partial x_k\partial x_n}\frac{\partial \phi}{\partial x_1}\frac{\partial \phi}{\partial x_k}+\\\nonumber& -\sum_{k=1}^{n-1}\frac{\partial u_j}{\partial x_k}\frac{\partial^2 \phi}{\partial x_k\partial x_1}\Biggr)\frac{1}{\sqrt{1+|\nabla\phi|^2}}-\left( \frac{\partial u_j}{\partial x_n}-\sum_{k=1}^{n-1}\frac{\partial u_j}{\partial x_k}\frac{\partial \phi}{\partial x_k}\right)\sum_{k=1}^{n-1}\frac{\partial \phi}{\partial x_k}\frac{\partial^2 \phi}{\partial x_k\partial x_1}\left(1+|\nabla \phi|^2\right)^{-\frac{3}{2}}.\end{align}
Evaluating \eqref{derx1} in $0$, it follows $\frac{\partial^2 u_j}{\partial x_n \partial x_1}(0)=0$. In conclusion, the first and second derivatives of $u_j$ and $u_j^{\lambda_c}$ agree at $0$. We can apply Lemma \eqref{hopf} to the function $w=u_j^{\lambda_c}-u_j$ in $\Sigma'(\lambda_c)$ at $0$. This yields
\begin{equation*}\frac{\partial w}{\partial s}(0)<0\quad \text{or}\quad \frac{\partial^2 w}{\partial s^2}(0)<0,\end{equation*}
contradicting the fact that both $u_j$ and $u_j^{\lambda_c}$ have the same first and second partial derivatives at $\overline{y}$.
If $\frac{\partial u_i}{\partial x_n}(0)>0$ for $1\leq i\leq m$, in order to study the values $\frac{\partial^2 u_i}{\partial x_1\partial x_n}(0)$, we take into account the second boundary condition in \eqref{neum}, that is
\begin{equation}\label{diri4}
F\left(\frac{\partial u_1}{\partial \nu(x)},..., \frac{\partial u_m}{\partial \nu(x)}\right)=c.
\end{equation}
Differentiating \eqref{diri4} with respect to $x_1$ and evaluating in $0$, we obtain
\begin{align}\label{neum5}
\sum_{i=1}^m\frac{\partial F}{\partial x_i}\left(\frac{\partial u_1}{\partial x_n}(0),..., \frac{\partial u_m}{\partial x_n}(0)\right)\frac{\partial^2 u_i}{\partial x_n\partial x_1}(0)=0.
\end{align}
The sign of each of the terms on the left hand side of \eqref{neum5} is known. The partial derivatives $\frac{\partial u_i}{\partial x_1}$ restricted to $T_{\lambda_c}\cap\overline{\Omega}$ have a maximum at $0$, because of the first inequality in \eqref{in1}. From Schwarz's Theorem, it follows that $\frac{\partial^2 u_i}{\partial x_n \partial x_1}(0)\leq 0$. Therefore, equation \eqref{neum5} is equivalent to the following system of equations
\begin{equation*}\frac{\partial F}{\partial x_i}\left(\frac{\partial u_1}{\partial x_n}(0),..., \frac{\partial u_m}{\partial x_n}(0)\right)\frac{\partial^2 u_i}{\partial x_n\partial x_1}(0)=0\quad \forall i=1,..., m.\end{equation*}
By hypothesis, there exists an index $j$ such that $\frac{\partial F}{\partial x_j}>0$ in $(0, +\infty)^m$. Hence, $\frac{\partial^2 u_j}{\partial x_n \partial x_1}(0)=0$ and then the first and second derivatives of $u_j$ and $u_j^{\lambda_c}$ agree at $0$. We obtain a contradiction arguing as above.
\end{proof}
\begin{oss} The previous result still holds if functions $u_i$ are solutions to equations of the type described in Remark \ref{nonlin}, provided the function $F(x, z, p_k, r_{i,j})$ is independent of variables $r_{1,\alpha}$ for $\alpha>1$. This additional condition is necessary in order to verify hypothesis \eqref{cond1} in Lemma \ref{hopf}.
\end{oss}
\begin{oss} The monotonicity assumptions on function $F$ cannot be dropped as the following examples show.\\
By removing hypothesis $iii)$, we can set $F(x,y)=\begin{cases}
\frac{x}{y} & \mbox{if } y>0,\\ 
0 & \mbox{if } y=0,\\ 
\end{cases}$. Let $u_{\Omega}$ be the torsion function of $\Omega$ and $v=cu_{\Omega}$, for some constant $c$. Then the following system is satisfied
\begin{equation*}
\begin{cases}
-\Delta u_{\Omega}=1 & \mbox{in } \Omega,\\ 
-\Delta v=c & \mbox{in } \Omega,\\ 
u_{\Omega}=v=0 & \mbox{on } \partial\Omega,\\
F\left(\frac{\partial v}{\partial \nu},\frac{\partial u_{\Omega}}{\partial \nu}\right)=c& \mbox{on } \partial\Omega,\\
\end{cases}
\end{equation*}
for every smooth and bounded open set $\Omega\subset\mathbb{R}^n$. Similarly, we can set $u_{\Omega}$ and $v$ as above with $c\not=0$ and $F(x,y)=x^2-c^2y^2$, then we have
\begin{equation*}
\begin{cases}
-\Delta u_{\Omega}=1 & \mbox{in } \Omega,\\ 
-\Delta v=c & \mbox{in } \Omega,\\ 
u=v=0 & \mbox{on } \partial\Omega,\\
F\left(\frac{\partial v}{\partial \nu},\frac{\partial u_{\Omega}}{\partial \nu}\right)=0& \mbox{on } \partial\Omega,\\
\end{cases}
\end{equation*}
for every smooth and bounded open set $\Omega\subset\mathbb{R}^n$. \\
By removing hypothesis $iv)$, we can consider $F\equiv c$ identically constant, then the boundary value problem \eqref{neum} is not overdetermined anymore and the thesis becomes false in general. 
\end{oss}
\noindent In the end, we prove Corollary \ref{torlambda} by means of Theorem \ref{main} .

\begin{proof}[Proof of Corollary \ref{torlambda}]
Standard regularity results \cite[Theorem 6.8]{gilbtrud}, \cite[Theorem 1.2.12]{Henrot1} ensures that $u_{\Omega}, v_{\Omega}\in C^{2,\gamma}(\overline{\Omega})$. Since $\Omega$ is connected, \cite[Theorem 11.5.4]{jost} assures that the first eigenvalue $\lambda_1(\Omega)$ is simple and $v_{\Omega}$ can be chosen positive in $\Omega$. An application of the strong maximum principle shows that $u_{\Omega}>0$ in $\Omega$. By applying Theorems \ref{torsion}, \ref{firsteigen} for $\theta\in W^{1, \infty}(\mathbb{R}^n,\mathbb{R}^n)$ satisfying \eqref{volpres}, we get
\begin{equation}\label{d}
dJ(\Omega, \theta)=\int_{\partial \Omega}\left[\alpha T(\Omega)^{\alpha-1}|\nabla u_{\Omega}|^2-\beta\lambda_1^{\beta-1}(\Omega)|\nabla v_{\Omega}|^2\right]\theta\cdot n\,d\,\mathcal{H}^{n-1}=0, 
\end{equation}
 Since $\partial\Omega$ is smooth, condition \eqref{volpres} can be rewritten equivalently as
\begin{equation*}
    \int_{\Omega}\text{div }\theta\,dx=\int_{\partial\Omega}\theta\cdot n\,d\,\mathcal{H}^{n-1}.
\end{equation*}
Therefore, \eqref{d} implies
\begin{equation*}
    \alpha T(\Omega)^{\alpha-1}|\nabla u_{\Omega}|^2-\beta\lambda_1^{\beta-1}(\Omega)|\nabla v_{\Omega}|^2=c\quad \text{on }\partial\Omega.
\end{equation*}
If we set $F(x, y)=\alpha T(\Omega)^{\alpha-1}x^2-\beta\lambda_1^{\beta-1}(\Omega)y^2$, then Theorem \ref{main} can be applied and the thesis follows. 
\end{proof}
\subsection*{Acknowledgements}
The authors were partially supported by Gruppo Nazionale per l’Analisi Matematica, la Probabilità e le loro Applicazioni
(GNAMPA) of Istituto Nazionale di Alta Matematica (INdAM).   
\bibliographystyle{plain}
\bibliography{Bibliografia}

\end{document}